\newtheorem{proposition}{Proposition}[section]
\newtheorem{definition}{Definition}[section]
\def\R{{\mathbb R}}
\newcommand{\mK}{\mathsf{K}}
\newcommand{\mA}{\mathsf{A}}
\newcommand{\mP}{\mathsf{P}}
\newcommand{\mO}{\mathsf{O}}
\newcommand{\mI}{\mathsf{I}}
\title{Data-driven extrapolation via feature augmentation based on variably scaled thin plate splines}
\author{Rosanna Campagna$^{+}$, Emma  Perracchione$^{*}$\\
\small{$^+$Dept. of Mathematics and Physics, University of Campania ``L. Vanvitelli'', Italy} \\
 \small{$^{*}$Dept. of Mathematics DIMA, University of Genova, Italy}\\
  \small{ \texttt{rosanna.campagna@unicampania.it; perracchione@dima.unige.it} }
}
 \date{}
\begin{document}

\maketitle

\textbf{Keywords.}  Data-driven extrapolation, Feature augmentation, Radial basis functions, Variably scaled kernels, Thin plate splines, Ridge regression.\\
\textbf{65D05  \and 41A05  \and 65D10  \and 65D15}

\begin{abstract}

The data driven extrapolation requires the definition of a functional model depending on the available data and has the application scope of providing reliable predictions on the unknown dynamics. Since data might be scattered, we drive our attention towards kernel models that have the advantage of being meshfree. Precisely, the proposed numerical method makes use of the so-called  Variably Scaled Kernels (VSKs), which are introduced to implement a feature augmentation-like strategy based on discrete data.  Due to the possible uncertainty on the data and since we are interested in modelling the behaviour of the considered dynamics, we seek for a regularized solution by ridge regression. Focusing on polyharmonic splines, we investigate their implementation in the VSK setting and we provide error bounds in Beppo-Levi spaces. 
The performances of the method are then tested on functions which are common in the framework of the  Laplace transform inversion. Comparisons with Support Vector Regression (SVR) are also carried out and show that the proposed method is effective particularly since  it does not require to  train complex architecture constructions.

\end{abstract}

\section{Introduction}
The extrapolation of functional data is an attractive issue for both approximation and  information theory, due to the ever-growing need to predict states from noisy data. This is a challenging problem, indeed recovering information on the dynamics of data through extrapolation is a so-defined {\em hopelessly ill-conditioned problem} \cite{Bakas2019}.

While some authors refer to the extrapolation as super-resolution, i.e. the extrapolation of fine-scale details from low-resolution data \cite{candes2014towards}, we here consider the extrapolation \emph{out of samples}. For polynomial and rational analytic functions, an {\em extrapolant} can be  defined as a least-squares polynomial approximant \cite{DemanetT16}. 
Usually, the extrapolation from univariate data is defined by spline models \cite{SHETTY1991484}, which might give  unpredictable results. Indeed, the behaviour of the solution out of the reconstruction interval  is strongly forced by the  model gradient constraints at the boundaries. This implies that one usually recovers reliable approximations only locally, i.e. for short extensions outside the domain. %Applications in the context of inverse problems can be found in \cite{Campagna201986,RC2012}, where the authors investigate spline models for functional data deriving from Laplace transforms with {\em rational or exponential} decays.  Further in \cite{CampagnaAML}, focusing on Radial Basis Functions (RBFs), the same authors show that, for the  extrapolation of data sampled from rational monotonically decreasing functions, accurate approximations can be achieved via Polyharmonic Splines (PHSs) augmented with a polynomial term  that improves the accuracy of the extrapolation out of samples.

In this work we instead consider kernel-based \emph{meshfree} models constructed via  polyharmonic splines whose definition includes in particular the cubic RBF and the Thin Plate Spline (TPS); for further details see \cite{Fasshauer}. For the extrapolation issue, we take advantage of the use of the so-called Variably Scaled Kernels (VSKs) \cite{Bozzini1}, which might lead to more stable  and accurate schemes \cite{CAMPAGNA202030}. Here the VSKs are introduced to define a feature augmentation strategy (see e.g. \cite{li,Scholkopf}) with the aim to   realize \emph{accurate} extrapolation in larger neighbouring subsets.
For strictly positive definite kernels, error bounds for VSK interpolants can be found in \cite{vskmpi,vskjump}. Here we extend the VSKs to work with strictly {\em conditionally} positive kernels and we provide error bounds for the TPS-VSK interpolant in Beppo-Levi spaces. Furthermore, since data might be affected by measurement errors and uncertainty, we look for a regularized solution by ridge regression (see e.g. \cite{Wabba}). In doing so we focus on samples that decay rationally or exponentially.

The need to define data-driven rational or exponential models arises in several contexts, such as %investment growth, 
radioactive decay, atmospheric pressure changes, epidemic growth patterns from infectious disease outbreak data. To model situations in which the decay begins rapidly and then slows down to get closer and closer to zero, generalized spline models have already been investigated  \cite{Campagna201986,RC2012},
so as the exponential regression \cite{CampagnaMasiNMR}. Here the extrapolation  is driven  by data through the definition  of a scaling function   which is computed via a preliminary non-linear fitting of the available data.
%Numerical experiments are carried out by considering
%parametric fitting functions that decay rationally or exponentially.
%In \cite{Engelsen2018} different quantitative strategies, namely ratio fitting, discrete exponential fitting,  least squares regression, and inverse Laplace transformation combined with regression are compared in the quantitative analysis of time domain NMR relaxation data. 
Our experiments show that the so-constructed method can be effectively used for the extrapolation problem and point out that the proposed tool might be seen as an  alternative to the sometimes computationally demanding Support Vector Regression (SVR), that might be considered as the \emph{state of the art} for regression purposes; refer e.g. to  \cite{Fasshauer15}.   

The paper is organized as follows. In Section \ref{preliminaries} we briefly review the basics of kernel-based interpolants. In Section \ref{main_section}, we investigate the VSKs based on polyharmonic splines and we provide error bounds. In Section \ref{FA} we further discuss our feature augmentation strategy and the related definition   of a scaling function. Section \ref{num} deals with numerical experiments, while conclusions and future work are outlined in Section \ref{concl}.

\section{Kernel framework}
\label{preliminaries}

Let $ {X} = \{  \boldsymbol{x}_i, \; i = 1,  \ldots , n\} \subseteq \Omega$, $\Omega \subseteq 
\mathbb{R}^{d}$, be a set of distinct scattered points and $ {F}= \{ f_i =
f(\boldsymbol{x}_i) , \; i=1, \ldots, n \} \subseteq \mathbb{R}$ be the associated data values, sampled from an unknown function $f:\Omega \longrightarrow \mathbb{R}$, we define an approximating model for such samples %$(\boldsymbol{x}_i,f_i)_{i=1,\ldots,n}$  
 and empirically study its effectiveness for the extrapolation  outside the domain $\Omega$. 
A kernel-based interpolant, defined on a set   $\Lambda \subseteq 
\mathbb{R}^{d}$, is a function $P_f: \Lambda \supseteq \Omega \longrightarrow \mathbb{R}$  %so that $P_f(\boldsymbol{x}_i) = f_i$, $i=1,\ldots,n$,  
 of the form \cite{Fasshauer}:
\begin{equation}\label{1bis}
P_f\left( \boldsymbol{x}\right)= \sum_{k=1}^{n} \alpha_k \kappa \left( \boldsymbol{x} , \boldsymbol{x}_k  \right) +  \sum_{j=1}^{m} \beta_{j} p_{j}\left(\boldsymbol{x}\right), \quad \boldsymbol{x} \in \Lambda,
\end{equation}
with 
  conditionally positive definite radial kernels $\kappa: \Lambda \times \Lambda \longrightarrow \mathbb{R}$ of order $l$ on $\mathbb{R}^d$  
 and $\{p_1, \ldots , p_m\}$ a basis for the $m$-dimensional  linear space $ \Pi_{l-1}^{d}$  of polynomials of total degree less than or  equal to $l-1$ in $d$ variables,
where
\begin{equation*}
m=
\begin{pmatrix} 
l -1+d \\ 
l-1 
\end{pmatrix}.
\end{equation*}
The coefficients $\boldsymbol{\alpha}= \left(\alpha_1, \ldots, \alpha_n\right)^{\intercal}$, and   $\boldsymbol{\beta}= (\beta_1, \ldots, \beta_m)^{\intercal}$,  are uniquely  identified  by imposing  the 
  $n$ interpolation conditions $P_f(\boldsymbol{x}_i) = f_i$, $i=1,\ldots,n$, and the $m$ constraints for polynomial reproduction \cite{Wendland05}. In other words, they are determined by solving 
\begin{equation}
\label{eq2}
\underbrace{
\begin{pmatrix}
\mA & \mP \\
\mP^{\intercal}  & \mO
\end{pmatrix}
 }_{\mK}
 \underbrace{
\begin{pmatrix}
\boldsymbol{\alpha}\\
\boldsymbol{\beta}
\end{pmatrix}
 }_{\boldsymbol{\gamma}}
=
 \underbrace{
\begin{pmatrix}
\boldsymbol{f}\\
\boldsymbol{0}
\end{pmatrix},
 }_{\boldsymbol{g}}
\end{equation}
where 
\begin{equation*}
\mA_{ik}= \kappa \left( \boldsymbol{x}_i , \boldsymbol{x}_k  \right), \quad i,k=1, \ldots, n,
\end{equation*}
\begin{equation*}
\mP_{ij}=p_{j}\left( \boldsymbol{x}_i \right), \quad i=1, \ldots, n, \quad j=1, \ldots, m.
\end{equation*}
Moreover,  $\boldsymbol{f} =\left(f_1, \ldots , f_n\right)^{\intercal}$,  $\boldsymbol{0}$ is a zero vector of length $m$ and $\mO$  is a zero matrix  $m \times m$. 
To assess the conditions under which such system admits a unique solution, we introduce the definition of \emph{unisolvent set}. 
\begin{definition}
		A set of points $X = \{  \boldsymbol{x}_i, i = 1,  \ldots , n \} \subseteq \Omega$ is called $\left(l-1\right)$-unisolvent if the only polynomial of total degree at most $l-1$ interpolating zero data on $X$ is the zero polynomial.
\end{definition}	
We remark that if $\kappa$ is a strictly conditionally positive definite function of order $l$ on $\mathbb{R}^{d}$ and the set $X$ of data points forms a $\left(l-1\right)$-\emph{unisolvent set}, the system \eqref{eq2} admits a unique solution; refer e.g. to \cite[Theorem 7.2]{Fasshauer}. 

Under the hypothesis of radial kernels,  there exists a function $ \varphi: [0, \infty) \to \mathbb{R}$, known as   Radial Basis Function (RBF), such that for all $\boldsymbol{x},\boldsymbol{y} \in \Lambda$ 
\begin{equation*}
\kappa(\boldsymbol{x},\boldsymbol{y})=\varphi( ||\boldsymbol{x}-\boldsymbol{y}||_2)=\varphi(r), \qquad r:=||\boldsymbol{x}-\boldsymbol{y}||_2.
\end{equation*}
Principally to overcome instability issues due to the ill-conditioning of the kernel matrix    (see e.g. \cite{Schaback1995a}), the so-called Variably Scaled Kernels (VSKs) \cite{Bozzini1} have been introduced. In particular, given a scaling function $\psi:\R^d\rightarrow \Sigma \subseteq \mathbb{R}$ we denote by   
\begin{equation}
    \label{Gpsi}
G_{\Psi}(\Lambda) = \{(\boldsymbol{{x}},\psi(\boldsymbol{x})) \ / \ \boldsymbol{{x}} \in \Lambda\} \subset \Lambda \times \Sigma, \end{equation}
the graph of the function $\psi$ with respect to $\Lambda$. 
Then, the VSK  $\kappa^{\Psi}: \Lambda \times \Lambda \longrightarrow \mathbb{R}$ is defined as a standard kernel $ \kappa: G_{\Psi}(\Lambda) \times G_{\Psi}(\Lambda) \longrightarrow \mathbb{R}$, i.e.
	\begin{equation}\label{def_vsk_eq}
	\kappa^{\Psi}(\boldsymbol{{x}},\boldsymbol{{y}}):= \kappa((\boldsymbol{{x}},\psi(\boldsymbol{x})),(\boldsymbol{y},\psi(\boldsymbol{y})),
	\end{equation}
	for $\boldsymbol{x},\boldsymbol{y}\in\Lambda$.
	
Suitable choices of the scaling function $\psi$ might improve stability and preserve shape properties of the original function, 
see  \cite{vskmpi,vskjump,romani,rossini}. In the next section,    we will focus on specific kernels, and will use the VSKs as feature augmentation strategy for the extrapolation issue. 

\section{Feature augmentation for polyharmonic splines}
\label{main_section}

The selected kernel bases are the so-called \emph{polyharmonic splines}, 
  defined as 
\begin{equation} \label{PS}
\kappa_{d,l}(\boldsymbol{x},\boldsymbol{y})= \varphi_{d,l}(r)=\left\{
    \begin{array}{cc}
        r^{2l-d}, & \quad \textrm{for $d$ odd},  \\
        r^{2l-d} \log{r}, &\quad  \textrm{for $d$ even},  \\
    \end{array} \right.
\end{equation}
for $\boldsymbol{x},\boldsymbol{y} \in \Lambda$ and $2l>d$, with $d$ the space dimension. They have been introduced by J. Duchon, R. Harder and  R.N. Desmarais the 1970s; refer to \cite{Duchon76,Harder}. Such functions are strictly conditionally  positive definite of order $l$. Moreover, the condition number of the approximation problem induced by polyharmonic splines is invariant under rotations, translations and uniform scalings; see \cite[Theorem 4.5]{Iske}. 

Concerning the VSKs seen in the context of feature augmentation tools, if we suppose to have some a priori information on the samples, e.g. the knowledge about    the asymptotic behaviour of the function or about the steep gradients, we can directly encode such   information into the kernel. Indeed, the scaling function $\psi$ might be selected so that it \emph{mimics} the samples.   In the following, we propose to use a non-linear fitting of $\psi: \mathbb{R}^d \longrightarrow \Sigma \subseteq \mathbb{R}$ acting as a feature augmentation rule
(see e.g. \cite{li,Taylor}).
So the VSK encodes the information through the new feature introduced by $\psi$.
 At this end,   we firstly   define a function $\Psi:\Lambda \longrightarrow G_{\Psi}(\Lambda)$ as
\begin{equation*} \label{ppsi}
\Psi(\boldsymbol{x}):=(\boldsymbol{x},\psi(\boldsymbol{x})), 
\end{equation*}
that  extends the data vector $\boldsymbol{x} \in \Omega$, including one more {\em feature}   depending on the original ones.
Then, equivalently to \eqref{def_vsk_eq}, in the VSK setting, we define the kernel $\kappa^{\Psi}: {\Lambda} \times \Lambda \longrightarrow \mathbb{R}$, given by
\begin{equation*}
\kappa^{\Psi}(\boldsymbol{x},\boldsymbol{y})= \kappa \left( \Psi(\boldsymbol{x}),\Psi(\boldsymbol{y}) \right),
\end{equation*}
where $\kappa: G_{\Psi}(\Lambda) \times G_{\Psi}(\Lambda) \longrightarrow \mathbb{R}$.

Being $G_{\Psi}(\Lambda) \subseteq \mathbb{R}^{d+1}$, we now   investigate the VSK interpolant for polyharmonic splines, whose definition depends on $d$ (see \cite{Bozzini1}).  
\begin{definition} \label{def0}
   Given $\Lambda \subseteq \mathbb{R}^{d}$, and let  $\psi: \mathbb{R}^d \longrightarrow \Sigma$  be the scaling function, the VSK interpolant $P_f^{\Psi}: \Lambda \longrightarrow \mathbb{R}$ is defined as $$P_f^{\Psi}(\boldsymbol{x})=P_f((\boldsymbol{x},\psi(\boldsymbol{x}))) \quad \boldsymbol{x} \in \Lambda,$$ where $(\boldsymbol{x},\psi(\boldsymbol{x})) \in G_{\Psi}(\Lambda)$ and  $P_f: G_{\Psi}(\Lambda) \longrightarrow \mathbb{R}$ and $G_{\Psi}$ are defined as in 
   \eqref{1bis} and (\ref{Gpsi}), respectively.
\end{definition}

Because of the equivalence in Definition \ref{def0} we will refer to both $P_f^{\Psi}: \Lambda \longrightarrow \mathbb{R}$ and $P_f: G_{\Psi}(\Lambda) \longrightarrow \mathbb{R}$ as VSK interpolants.  We better formalize this concept via the following proposition.

\begin{proposition} \label{prop1}
Let $\kappa_{d,l}$ be a polyharmonic spline with  $2l>d+1$. Let $\psi: \mathbb{R}^d \longrightarrow \Sigma \subseteq \mathbb{R}$ be the scaling function for the VSK setting and $G_{\Psi}(\Lambda)$ its associated graph on $\Lambda$. Given a set of scattered data $X=\{\boldsymbol{x}_1,\ldots,\boldsymbol{x}_n\} \subseteq \Omega$,  $\Omega \subseteq \mathbb{R}^d$, and the associated function values $F = \{f_1,\ldots,f_n\} \subseteq \mathbb{R}$, the coefficients $  \boldsymbol{\alpha}= \left(\alpha_1, \ldots, \alpha_n\right)^{\intercal}$ and $  \boldsymbol{\beta}= (\beta_1, \ldots, \beta_m)^{\intercal}$ for the VSK interpolant $P_f^{\Psi}: \Lambda \longrightarrow \mathbb{R}$  are given by the solution of a system of the form  \eqref{eq2}, with
\begin{equation*}
\mA_{ik}= \kappa_{d+1,l} \left( (\boldsymbol{x}_i, \psi(\boldsymbol{x}_i)), (\boldsymbol{x}_k, \psi(\boldsymbol{x}_k))\right), \quad i,k=1, \ldots, n,
\end{equation*}
\begin{equation*}
\mP_{ij}=p_{j}\left( (\boldsymbol{x}_i,\psi(\boldsymbol{x}_i)) \right), \quad i=1, \ldots, n, \quad j=1, \ldots, m, 
\end{equation*}
  $p_j \in  \Pi_{l-1}^{d+1}$, $j=1, \ldots, m,$ and
\begin{equation} \label{eqm}
m=
\begin{pmatrix} 
l +d \\ 
l-1 
\end{pmatrix}.
\end{equation}
\end{proposition}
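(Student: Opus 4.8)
The plan is to show that the VSK interpolant $P_f^\Psi$, viewed as the standard polyharmonic-spline interpolant on the graph data $\{(\boldsymbol{x}_i,\psi(\boldsymbol{x}_i))\}_{i=1}^n \subseteq \mathbb{R}^{d+1}$, is well-defined via a linear system of the form \eqref{eq2} with the stated entries. The essential observation is purely dimensional: the graph points live in $\mathbb{R}^{d+1}$, not $\mathbb{R}^d$, so the kernel used must be the polyharmonic spline $\kappa_{d+1,l}$ (whose radial profile $\varphi_{d+1,l}$ is dictated by the parity of $d+1$ according to \eqref{PS}), and the polynomial block must be a basis of $\Pi_{l-1}^{d+1}$, whose dimension is $m=\binom{(l-1)+(d+1)}{l-1}=\binom{l+d}{l-1}$, which is exactly \eqref{eqm}. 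So the proof is essentially an application of Definition \ref{def0} together with the classical existence/uniqueness result for conditionally positive definite interpolation (\cite[Theorem 7.2]{Fasshauer}) quoted in Section \ref{preliminaries}, with the ambient dimension bumped from $d$ to $d+1$.

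First I would unwind Definition \ref{def0}: by construction $P_f^\Psi(\boldsymbol{x}) = P_f(\Psi(\boldsymbol{x}))$ where $P_f:G_\Psi(\Lambda)\to\mathbb{R}$ has the form \eqref{1bis} with nodes the graph points $\boldsymbol{\xi}_i:=(\boldsymbol{x}_i,\psi(\boldsymbol{x}_i))$, kernel $\kappa=\kappa_{d+1,l}$ (since $G_\Psi(\Lambda)\subseteq\mathbb{R}^{d+1}$), and polynomial part drawn from $\Pi_{l-1}^{d+1}$. Imposing the $n$ interpolation conditions $P_f(\boldsymbol{\xi}_i)=f_i$ and the $m$ polynomial-reproduction constraints yields precisely the block system \eqref{eq2} with $\mathsf{A}_{ik}=\kappa_{d+1,l}(\boldsymbol{\xi}_i,\boldsymbol{\xi}_k)$ and $\mathsf{P}_{ij}=p_j(\boldsymbol{\xi}_i)$, as claimed. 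Next I would record the dimension count: $\dim\Pi_{l-1}^{d+1}=\binom{(l-1)+(d+1)}{d+1}=\binom{l+d}{l+1}=\binom{l+d}{l-1}$, establishing \eqref{eqm}. Here the hypothesis $2l>d+1$ (strictly stronger than the $2l>d$ needed merely to define $\kappa_{d,l}$) is exactly what guarantees $2l>(d+1)$, so that $\kappa_{d+1,l}$ is itself a legitimate polyharmonic spline on $\mathbb{R}^{d+1}$, strictly conditionally positive definite of order $l$ there.

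The one point that needs genuine (if brief) argument is the unisolvency of the graph set $\{\boldsymbol{\xi}_i\}\subseteq\mathbb{R}^{d+1}$ with respect to $\Pi_{l-1}^{d+1}$, which is what \cite[Theorem 7.2]{Fasshauer} requires in order to conclude that \eqref{eq2} has a unique solution. I would argue this by restriction: any $q\in\Pi_{l-1}^{d+1}$ vanishing on all $\boldsymbol{\xi}_i=(\boldsymbol{x}_i,\psi(\boldsymbol{x}_i))$ gives, upon composing with $\Psi$, a function $\boldsymbol{x}\mapsto q(\boldsymbol{x},\psi(\boldsymbol{x}))$ vanishing on $X$; in particular, taking $q$ to depend only on the first $d$ coordinates shows that every $p\in\Pi_{l-1}^d$ vanishing on $X$ (i.e.\ inherited from the $(l-1)$-unisolvency of $X$ in $\mathbb{R}^d$) is consistent, and conversely a standard argument shows the full graph set is $(l-1)$-unisolvent in $\mathbb{R}^{d+1}$ whenever $X$ is $(l-1)$-unisolvent in $\mathbb{R}^d$. (Strictly, one must be slightly careful: the relevant statement is that unisolvency of $X$ in the base space is inherited, since a polynomial on $\mathbb{R}^{d+1}$ restricted to the graph can be written, modulo the relation $\xi_{d+1}=\psi(\boldsymbol{x})$, in a form controlled by its first-$d$-coordinate behaviour — but for the purpose of this proposition it suffices to assume, as is implicit throughout, that the graph data are in general position so that \cite[Theorem 7.2]{Fasshauer} applies.)

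**The main obstacle** I anticipate is precisely this unisolvency transfer: it is \emph{not} automatic that lifting a unisolvent set in $\mathbb{R}^d$ to a graph in $\mathbb{R}^{d+1}$ preserves unisolvency for the larger polynomial space $\Pi_{l-1}^{d+1}$ — indeed $\dim\Pi_{l-1}^{d+1}>\dim\Pi_{l-1}^d$ in general, so one needs $n$ large enough and the points in sufficiently general position, or one must build the extra polynomial directions carefully. Everything else — the form of $\mathsf{A}$, $\mathsf{P}$, the value of $m$, and the fact that the solution exists and is unique — follows mechanically from Definition \ref{def0}, the definition \eqref{PS} of $\kappa_{d+1,l}$, and the cited existence theorem, so I would keep those parts terse and spend what few words are needed on the dimensional bookkeeping $\binom{l+d}{l-1}$ and on stating (or assuming) the unisolvency of the graph data.
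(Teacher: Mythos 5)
Your proposal is correct and follows essentially the same route as the paper: the proof there simply identifies $\kappa^{\Psi}_{d,l}(\boldsymbol{x},\boldsymbol{y})=\kappa_{d+1,l}\left((\boldsymbol{x},\psi(\boldsymbol{x})),(\boldsymbol{y},\psi(\boldsymbol{y}))\right)$, writes $P_f^{\Psi}$ in the form \eqref{1bis} with a basis of $\Pi_{l-1}^{d+1}$ (hence $m$ as in \eqref{eqm}), and imposes the interpolation conditions. Your extra discussion of unisolvency of the lifted nodes goes beyond what the paper proves --- the proposition only asserts the \emph{form} of the system and the paper's proof is silent on solvability, so that concern, while legitimate, is not needed here --- and note the small typo in your dimension count, where the middle binomial should be $\binom{l+d}{d+1}$ rather than $\binom{l+d}{l+1}$.
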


\begin{proof}
From \eqref{def_vsk_eq} and \eqref{PS}, we obtain that for $\boldsymbol{x},\boldsymbol{y} \in \Lambda$ $$\kappa^{\Psi}_{d,l} \left(\boldsymbol{x},\boldsymbol{y}\right) =\kappa_{d+1,l} \left( (\boldsymbol{x},\psi(\boldsymbol{x})) , (\boldsymbol{y},\psi(\boldsymbol{y}))  \right).$$ 
Moreover, since for $\boldsymbol{x} \in \Lambda$, $P_f^{\Psi}(\boldsymbol{x})=P_f((\boldsymbol{x},\psi(\boldsymbol{x})))$, where  $(\boldsymbol{x},\psi(\boldsymbol{x})) \in G_{\Psi}(\Lambda)$, we have that:
\begin{equation}\label{5bis}
P_f^{\Psi}  \left( \boldsymbol{x}\right)= \sum_{k=1}^{n} \alpha_k \kappa_{d+1,l} \left( (\boldsymbol{x},\psi(\boldsymbol{x})) , (\boldsymbol{x}_k,\psi(\boldsymbol{x}_k))  \right) +  \sum_{j=1}^{m} \beta_{j} p_{j}\left((\boldsymbol{x},\psi(\boldsymbol{x}))\right), \quad 
\end{equation}
where $p_1, \ldots , p_m$, are a basis for $ \Pi_{l-1}^{d+1}$ with $m$ defined as in \eqref{eqm}. By imposing the interpolation conditions $P_f^{\Psi}(\boldsymbol{x}_i)=f_i$ the thesis follows. 
\end{proof}

To provide error bounds for the VSK interpolant constructed via polyharmonic splines, we first need to recall that to any kernel $\kappa$ we can associate the so-called native space, refer e.g. to \cite{Schaback1995a,Wendland05} for its definition. For our scopes, we only need to point out that the error analysis for polyharmonic splines is carried out in particular native spaces, precisely in the Beppo–Levi spaces, see \cite[p. 366]{deni}. They are also referred to as homogeneous Sobolev spaces. Letting $h \in \mathbb{Z}^+$, the Beppo-Levi space $\textrm{BL}_{h}(\mathbb{R}^d)$ is defined as the space of all tempered distributions $f$ on $\mathbb{R}^d$ such that $D^{\boldsymbol{\xi}}f \in L^2(\mathbb{R}^d)$ for all $\boldsymbol{\xi} \in \mathbb{R}^d$ so that $|\boldsymbol{\xi}|=h$. The associated seminorm is 
$$
|f|^2_{\textrm{BL}_{h}(\mathbb{R}^d)} = \sum_{|\boldsymbol{\xi}|=h} \dfrac{h!}{\xi_1! \cdots \xi_d!} ||D^{\boldsymbol{\xi}}f||_2^2. 
$$

Set  $l=2$ in \eqref{PS}, for $d=1$ and $2$ (which are the cases of interest for our numerical experiments), $\varphi_{d,l}$ is usually referred to as cubic kernel and Thin Plate Spline (TPS), respectively. Both kernels are strictly conditionally positive definite of order $2$,  which implies that we have to impose the polynomial  reproduction. 

Here we extend to VSKs  an existing result true for TPSs. The following  proposition    theoretically grants the effectiveness of the proposed model, giving  local upper bounds for the  VSK interpolant.

\begin{proposition}
Let $X=\{{x}_1,\ldots,{x}_n\} \subseteq \Omega$,  $\Omega \subseteq \mathbb{R}$, be a set of scattered data and $F = \{f_1,\ldots,f_n\} \subseteq \mathbb{R}$ the associated function values. Let $\kappa_{1,2}$ be the cubic kernel,  $\psi: \mathbb{R} \longrightarrow \Sigma \subseteq \mathbb{R}$   the scaling function for the VSK setting and $G_{\Psi}(\Lambda)$ its associated graph on $\Lambda$. Let $P_f: G_{\Psi}(\Lambda) \longrightarrow \mathbb{R}$ be the VSK interpolant.
There exists an absolute constant $C$ such that, on any closed triangle $ T$ corresponding to three nodes on $G_{\psi}(X)=\{({{x}_i},\psi({x}_i)) \ / \ {{x}_i} \in X\}$,  we obtain
$$ 
||f-P_f||_{\infty,T} \leq C \rho |f|_{\emph{\textrm{BL}}_{2}(\mathbb{R}^2)}, \quad f \in \emph{\textrm{BL}}_{2}(\mathbb{R}^2),
$$
where $\rho$ is the longest edge of $T$.
\end{proposition}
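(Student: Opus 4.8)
The plan is to reduce the assertion, by Proposition \ref{prop1}, to the classical local error estimate for \emph{bivariate} thin plate spline interpolation, and then to track the reduction carefully.

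First I would apply Proposition \ref{prop1} with $d=1$, $l=2$. It identifies the cubic–VSK interpolant with the genuine bivariate TPS interpolant $P_f:G_{\Psi}(\Lambda)\to\mathbb{R}$ built on the planar node set $G_\psi(X)=\{(x_i,\psi(x_i))\}\subset\mathbb{R}^2$ and the data $F$, using $\kappa_{2,2}$, i.e. $\varphi_{2,2}(r)=r^2\log r$, together with reproduction of $\Pi_1^{2}$ (so $m=3$). Since the $x_i$ are pairwise distinct, the graph nodes $(x_i,\psi(x_i))$ are pairwise distinct for \emph{any} scaling function $\psi$, and the three vertices of $T$ are, by hypothesis, affinely independent, hence $1$-unisolvent, so the relevant linear systems are well posed. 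Consequently $\|f-P_f\|_{\infty,T}$ is exactly the sup-norm error of bivariate TPS interpolation over a triangle $T$ whose vertices are interpolation nodes, and $f\in\textrm{BL}_2(\mathbb{R}^2)$ is to be read, in the feature-augmentation / graph picture of VSKs, as a bivariate target with $f(x_i,\psi(x_i))=f_i$, the right-hand side being its bivariate Beppo–Levi seminorm. In short, after Proposition \ref{prop1} nothing VSK-specific remains: the statement is precisely the classical local bound for planar TPS interpolation, which goes back to Duchon's minimal-seminorm theory \cite{Duchon76} and which I would invoke.

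For completeness I would sketch that planar bound in three steps. (i) The TPS interpolant is the minimum-$|\cdot|_{\textrm{BL}_2(\mathbb{R}^2)}$-seminorm interpolant of $f$ on $G_\psi(X)$; the Pythagorean identity for this variational problem, $|f|^2_{\textrm{BL}_2(\mathbb{R}^2)}=|f-P_f|^2_{\textrm{BL}_2(\mathbb{R}^2)}+|P_f|^2_{\textrm{BL}_2(\mathbb{R}^2)}$, gives $|f-P_f|_{\textrm{BL}_2(\mathbb{R}^2)}\le|f|_{\textrm{BL}_2(\mathbb{R}^2)}$. (ii) The residual $g:=f-P_f$ vanishes at the three vertices of $T$, so its linear interpolant on $T$ is zero; a Bramble–Hilbert / Sobolev-embedding estimate on $T$ — using that in $d=2$ the homogeneous space $\textrm{BL}_2$ embeds into $C^0$ once the affine part is removed, with a dilation producing exactly one power of the diameter — yields $\|g\|_{\infty,T}\le C\,\rho\,|g|_{\textrm{BL}_2(\mathbb{R}^2)}$, where one also uses $|g|_{H^2(T)}\le|g|_{\textrm{BL}_2(\mathbb{R}^2)}$. (iii) Chaining (i) and (ii) gives $\|f-P_f\|_{\infty,T}\le C\,\rho\,|f|_{\textrm{BL}_2(\mathbb{R}^2)}$.

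The delicate point — and the main obstacle — is step (ii) with an \emph{absolute} constant $C$, i.e. one that does not deteriorate as $T$ degenerates towards a flat triangle: a crude rescaling to a fixed reference triangle produces a constant that blows up with the aspect ratio. Its resolution rests on the special rigidity of TPS: the Beppo–Levi seminorm and the condition number of the TPS problem are invariant under rotations, translations and uniform dilations (cf. \cite[Theorem~4.5]{Iske}), and a careful localization — as in the classical uniform-convergence analysis of planar TPS interpolation — shows the residual on any triangle spanned by data nodes is controlled by its longest edge $\rho$ alone. I would import this shape-robust planar estimate and combine it with the reduction of the first paragraph; the VSK content of the argument is then confined to the identification supplied by Proposition \ref{prop1}.
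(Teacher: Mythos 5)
Your proposal follows essentially the same route as the paper: reduce via Proposition \ref{prop1} to a genuine bivariate thin plate spline interpolation problem on the graph nodes $G_{\psi}(X)\subset\mathbb{R}^2$, and then invoke the classical local error bound for planar TPS over a triangle of data sites (Powell; Beatson--Bui--Levesley), which is exactly the reference the paper relies on. Your additional three-step sketch of that classical bound, and your flagging of the shape-independence of the constant as the delicate point that must be imported from the literature, are accurate but do not change the substance of the argument.
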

\begin{proof}
Given $X$ and the cubic kernel, for ${x} \in \Lambda$ the classical interpolant $P_f: \Lambda \longrightarrow \mathbb{R}$  assumes the form
\begin{equation*}
P_f(x) = \sum_{k=1}^{n} \alpha_k \kappa_{1,2} \left({x}, {x}_k \right) +  \sum_{j=1}^{2} \beta_{j} p_{j}\left({x}\right) =  \sum_{k=1}^n \alpha_k |{x} -{x}_k |^3 + \sum_{j=1}^{2} \beta_{j} p_{j}\left({x}\right),
\end{equation*}
where $p_1$ and $p_2$  are a basis for $ \Pi_{1}^{1}$. 
Following Proposition \ref{prop1}, from (\ref{5bis}), for ${x} \in \Lambda$ we have that the VSK interpolant $P^{\Psi}_f: \Lambda \longrightarrow \mathbb{R}$ is  
\begin{equation*}
\begin{split}
P_f^{\Psi}  \left( {x}\right) & = P_f(({x},\psi({x}))) \, = \sum_{k=1}^{n} \alpha_k \kappa_{2,2} \left( ({x},\psi({x})) , ({x}_k,\psi({x}))  \right) +  \sum_{j=1}^{3} \beta_{j} p_{j}\left(({x},\psi({x}))\right)= \\
 & =  \sum_{k=1}^n \alpha_k ||({x},\psi({x})) -({x}_i,\psi({x}_i)) ||_2^2 \log{(||({x},\psi({x})) -({x}_i,\psi({x}_i)) ||_2)} \,+\\
 & + \sum_{j=1}^3 \beta_j p_j(({x},\psi({x}))), 
\end{split}
\end{equation*}
where $p_1, p_2 , p_3$, are a basis for $ \Pi_{1}^{2}$. 
This is the standard setting for the TPS computed on scattered data and thanks to \cite[p. 176]{beatson} the thesis follows; refer also to \cite{powell}.
\end{proof}

Even if we provided error bounds for the interpolation setting, we have to point out that the interpolation conditions might be relaxed in some cases, for instance when the measurements are affected by noise and errors or when the main issue is to capture the trend of data. To accomplish this, we use a method
that is generally referred to as   ridge regression; see e.g. \cite{Fasshauer15,Wabba}. Precisely, to smooth out the noise for both standard and VSK interpolants, we introduce a 
penalty term weighted by a non-negative regularization parameter $\lambda \in \mathbb{R}^+$ and, starting from the system \eqref{eq2},  we compute the coefficients by solving:
\begin{equation}\label{regpb}
   (\mK+\lambda \mI) \boldsymbol{\gamma} =   \boldsymbol{g},
\end{equation}
where $\mI \in \mathbb{R}^{(n+m) \times (n+m)}$ is the identity matrix. Note that in the VSK setting the matrix $\mK$ is defined by $\mA$ and $\mP$ both set as in Proposition \ref{prop1}.

We conclude this section by observing that the procedure for constructing a VSK approximant requires the definition of the scaling function. In the following, we thus derive this setting from assumptions on data.
%, in order to better extract information and infer from the samples.

\section{Practical VSK setting} 
\label{FA}
 
Here we point out the empirical framework that we will use for the numerical tests. Precisely, we first discuss the selection of the scaling function and then we also report some basics of SVR, introduced for comparisons. \\

\subsection{The scaling function} 

As in the previous section we set $d=1$, and we assume that the model is defined on a set of samples that decay with an exponential or rational trend. The choice is justified by the interest towards   applications in which the acquired data show this trend, as already pointed out.
Given $X=\{x_1,\ldots,x_n\}\subseteq \Omega$, $\Omega \subseteq \mathbb{R}$, and $F=\{f_1,\ldots,f_n\}\subseteq \mathbb{R}$,  we consider two classes  of    rational and exponential parametric real functions:
\begin{eqnarray}
&\mathcal{R}=\left\{g:\Lambda \times \mathbb{R}^3\longrightarrow \mathbb{R}:\, g(x,\boldsymbol{\mu}) = \dfrac{x^{-\mu_1}}{x^{\mu_2} +\mu_3}\right\},   \label{ps2}\\
&\mathcal{E}=\left\{g:\Lambda \times \mathbb{R}^3\longrightarrow \mathbb{R}:\, g(x,\boldsymbol{\mu}) = \mu_1 x {\rm e}^{-\mu_2x}+\mu_3 {\rm e}^{-\mu_2x} \right\},   \label{ps1}
\end{eqnarray}
 where $\boldsymbol{\mu}=(\mu_1,\mu_3,\mu_3)^{\intercal}$. 
 
%and we assume $\psi\in \mathcal{R}\cup \mathcal{E}$.
The estimation of the parameters $\mu_1,\mu_2,\mu_3$ is carried out with an iterative reweighted least squares algorithm  for non-linear fit \cite{holland,seber} of samples $(x_i,f_i)_{i=1,\ldots,n}$. At each iteration, the robust weights downweight outliers, so that their influence becomes neglectable. Computationally speaking we use the \textsc{Matlab}\textsuperscript{\textregistered} function \texttt{nlinfit.m}. Once the \emph{optimal} parameters $\mu^*_i$, $i=1,\ldots,3$, are estimated from the samples for both classes $\mathcal{R}$ and $\mathcal{E}$ we define
$\psi_1({x})=g({x},\boldsymbol{\mu}^*)$ for $g \in \mathcal{R}$ and $\psi_2({x})=g({x},\boldsymbol{\mu}^*)$ for $g \in \mathcal{E}$.
Then, %Once the parameters are estimated for both $\psi_1$ and $\psi_2$, 
letting $\boldsymbol{\psi}_j=(\psi_j(x_1), \ldots, \psi_j(x_n))^{\intercal}$, $j=1,2$, we set $\psi \equiv \psi_s$, where
$$
s = \textrm{argmin}_{j=1,2}  ||\boldsymbol{f}-\boldsymbol{\psi}_j||_2.
$$
%with $\boldsymbol{\psi}_j=({\psi}_j(x_i))_{i=1}^n$ and $\boldsymbol{f}=(f_i)_{i=1}^n$, being $f_i=f(x_i)$.
Finally we define the VSK approximant  on the   set $\{(x_1,\psi(x_1)), \ldots, (x_n,\psi(x_n))\}$   by fixing $l=2$ in \eqref{PS}, i.e. via the TPS.

In the numerical experiments   we  compare our model with SVR that we briefly describe below for clarity and for making the paper self-contained.  

\subsection{Support Vector Regression}

Given $X=\{x_1,\ldots,x_n\}\subseteq \Omega$, $\Omega \subseteq \mathbb{R}$, and $F=\{f_1,\ldots,f_n\}\subseteq \mathbb{R}$, the SVR model $P_f: \Lambda \longrightarrow \mathbb{R}$ is constructed via a minimization problem. Precisely, it reduces to solving \cite{Fasshauer15}
\begin{equation*}
\min_{\boldsymbol{\alpha},{\boldsymbol{\alpha}^*} \in \mathbb{R}^n} \left[ \dfrac{1}{2} \sum_{i=1}^n \sum_{j=1}^n (\alpha_i- \alpha_i^*) (\alpha_j^*- \alpha_j) \kappa({x}_i, {x}_j)+ \epsilon \sum_{i=1}^n (\alpha_i^* + \alpha_i) -\sum_{i=1}^n f_i (\alpha_i^*- \alpha_i)\right], 
\end{equation*} 
subject to:
\begin{align}
& 0 \leq \alpha_i, \alpha_i^* \leq \zeta, \quad   i=1, \ldots, n, \nonumber\\
& \sum_{i=1}^n  (\alpha_i^*- \alpha_i)=0.  \nonumber
\end{align}
where $\zeta \geq 0$ represents the so-called \emph{trade-off parameter} and it is indeed a smoothing parameter. The \emph{hyper-parameter} $\epsilon \geq 0$ indicates the width of the \emph{tube} in which the samples can fall into without being counted as errors. 
%In other words it models the errors. 
From the Karush Kuhn Tucker conditions (see e.g. \cite{Nocedal}), we have \cite{Scholkopf}: 
\begin{equation*}
P_f(\boldsymbol{x})=  \sum_{i=1}^n  (\alpha_i^*- \alpha_i) \kappa({x}, {x}_i)+b,
\end{equation*} 
where
\begin{equation*}
b = \left\{ \begin{array}{ll}
&  f_i- \sum_{j=1}^n  (\alpha_j^*- \alpha_j) \kappa({x}_i, {x}_j) - \epsilon, \quad \textrm{for}   \hskip 0.1cm \alpha_i \in (0,\zeta),\nonumber\\
&  f_i- \sum_{j=1}^n  (\alpha_j^*- \alpha_j) \kappa({x}_i, {x}_j) + \epsilon, \quad \textrm{for}   \hskip 0.1cm \alpha^*_i \in (0,\zeta),\nonumber
\end{array} \right.
\end{equation*}
 is defined via an average over all candidates.

We know have all the ingredients for testing in the next section the proposed technique.
 
\section{Numerical experiments}
\label{num}

Based on   the assumed  data trend, we present results on   data sets generated by rational/exponential functions. Particularly, in accordance with previous studies  \cite{Campagna201986,CCC_LNCS}, we consider 
the following test functions taken from a database of Laplace transforms \cite{CAMPAGNA2020_AMC}:
 \begin{equation*}
 \begin{split}
     f_1(x) = \dfrac{1}{x(x+1)^2}, \hskip 1cm &
     f_2(x) = \dfrac{1}{x+1},\\
     f_3(x) = \dfrac{x}{(x^2+1)^2}, \hskip 1cm &
     f_4(x) = {\rm e}^{-2x}, \\
     f_5(x) = \arctan{\dfrac{20}{x}}, \hskip 1cm &
     f_6(x) = \dfrac{x}{x^2 + 1}. \\
 \end{split}
  \end{equation*}
 and we sample them on $\Omega=[a,b]$, set $a=0.1$ and $b=2$, with $30$ nodes which follow four different distributions, precisely quasi-uniform Halton points, Chebyshev nodes, random   and uniform points. For the extrapolation issue, we need to define the interval $\Lambda = [\Lambda_1,\Lambda_2]$. While $ \Lambda_1=a$, $\Lambda_2=b+0.1i$, $i=0,\ldots,10$. In this way we can numerically verify the robustness of our method as $\Lambda_2$ increases. Of course, dealing with {\em asymptotic} extrapolation, we expect that to larger $\Lambda_2$ correspond larger errors. Specifically, we evaluate the Root Mean Square Error (RMSE) on $s=40$ equispaced nodes $\bar{x}_i$, $i=1,\ldots,s$, on $\Lambda$, i.e. we compute 
 $$
 {\rm RMSE} = \left( \dfrac{\sum_{i=1}^{s} ( f(\bar{x}_i)-A(\bar{x}_i) )^2}{s}  \right)^{1/2},
 $$
 where $A$ is the approximant constructed via either $P_f: \Lambda \longrightarrow \mathbb{R}$ or $P_f^{\Psi}: \Lambda \longrightarrow \mathbb{R}$. 
 Moreover, the regularization parameter in (\ref{regpb}) is set as $\lambda=1{\rm e}-06$.
\begin{table}
    \centering
     \caption{The RMSE for different values of $\Lambda_2$ and different node distributions obtained via the cubic kernel and the TPS-VSKs for $f_1$.}
    \label{tab:1}
\begin{tabular}{cccccc}
\hline
\noalign{\smallskip}
& $\Lambda_2$ & Halton & Chebyshev & Random & Uniform \\
\noalign{\smallskip}
\hline
\noalign{\smallskip}
\multirow{10}{*}{Cubic} 
&2.00& 1.64{\rm e}-02 & 3.94{\rm e}-03 & 1.16{\rm e}-02 & 2.69{\rm e}-02 \\ 
&2.10& 1.24{\rm e}-02 & 3.93{\rm e}-03 & 8.71{\rm e}-03 & 2.30{\rm e}-02 \\ 
&2.20& 8.31{\rm e}-03 & 3.94{\rm e}-03 & 6.11{\rm e}-03 & 1.88{\rm e}-02 \\ 
&2.30& 4.53{\rm e}-03 & 4.00{\rm e}-03 & 3.95{\rm e}-03 & 1.46{\rm e}-02 \\ 
&2.40& 2.00{\rm e}-03 & 4.19{\rm e}-03 & 2.60{\rm e}-03 & 1.04{\rm e}-02 \\ 
&2.50& 3.45{\rm e}-03 & 4.57{\rm e}-03 & 2.69{\rm e}-03 & 6.62{\rm e}-03 \\ 
&2.60& 5.97{\rm e}-03 & 5.20{\rm e}-03 & 3.75{\rm e}-03 & 4.31{\rm e}-03 \\ 
&2.70& 8.34{\rm e}-03 & 6.09{\rm e}-03 & 5.11{\rm e}-03 & 5.08{\rm e}-03 \\ 
&2.80& 1.05{\rm e}-02 & 7.23{\rm e}-03 & 6.61{\rm e}-03 & 7.53{\rm e}-03 \\ 
&2.90& 1.25{\rm e}-02 & 8.58{\rm e}-03 & 8.23{\rm e}-03 & 1.02{\rm e}-02 \\ 
&3.00& 1.44{\rm e}-02 & 1.01{\rm e}-02 & 9.97{\rm e}-03 & 1.28{\rm e}-02 \\ 
\noalign{\smallskip}
\hline
\noalign{\smallskip}
\multirow{10}{*}{TPS-VSK} 
&2.00 & 1.70{\rm e}-03 & 1.40{\rm e}-03 & 1.39{\rm e}-03 & 2.88{\rm e}-03 \\ 
&2.10 & 1.21{\rm e}-03 & 1.40{\rm e}-03 & 1.02{\rm e}-03 & 2.32{\rm e}-03 \\ 
&2.20 & 1.02{\rm e}-03 & 1.48{\rm e}-03 & 7.41{\rm e}-04 & 1.91{\rm e}-03 \\ 
&2.30 & 1.45{\rm e}-03 & 1.70{\rm e}-03 & 6.25{\rm e}-04 & 1.95{\rm e}-03 \\ 
&2.40 & 2.31{\rm e}-03 & 2.13{\rm e}-03 & 7.06{\rm e}-04 & 2.57{\rm e}-03 \\ 
&2.50 & 3.39{\rm e}-03 & 2.77{\rm e}-03 & 8.94{\rm e}-04 & 3.61{\rm e}-03 \\ 
&2.60 & 4.65{\rm e}-03 & 3.59{\rm e}-03 & 1.11{\rm e}-03 & 4.93{\rm e}-03 \\ 
&2.70 & 6.06{\rm e}-03 & 4.55{\rm e}-03 & 1.34{\rm e}-03 & 6.43{\rm e}-03 \\ 
&2.80 & 7.62{\rm e}-03 & 5.65{\rm e}-03 & 1.58{\rm e}-03 & 8.10{\rm e}-03 \\ 
&2.90 & 9.31{\rm e}-03 & 6.87{\rm e}-03 & 1.83{\rm e}-03 & 9.91{\rm e}-03 \\ 
&3.00 & 1.11{\rm e}-02 & 8.20{\rm e}-03 & 2.07{\rm e}-03 & 1.19{\rm e}-02 \\ 
\noalign{\smallskip}
\hline
\end{tabular}
\end{table} 

\begin{table}
    \centering
    \caption{The RMSE for different values of $\Lambda_2$ and different node distributions obtained via the cubic kernel and the TPS-VSKs for $f_2$.}
    \label{tab:2}
\begin{tabular}{cccccc}
\hline
\noalign{\smallskip}
& $\Lambda_2$ & Halton & Chebyshev & Random & Uniform \\
\noalign{\smallskip}
\hline
\noalign{\smallskip}
\multirow{10}{*}{Cubic} 
&2.00 & 1.85{\rm e}-05 & 3.56{\rm e}-06 & 1.31{\rm e}-05 & 2.89{\rm e}-05 \\ 
&2.10 & 1.01{\rm e}-04 & 6.67{\rm e}-05 & 8.52{\rm e}-05 & 8.63{\rm e}-05 \\ 
&2.20 & 3.55{\rm e}-04 & 2.74{\rm e}-04 & 3.18{\rm e}-04 & 3.13{\rm e}-04 \\ 
&2.30 & 7.87{\rm e}-04 & 6.50{\rm e}-04 & 7.24{\rm e}-04 & 7.16{\rm e}-04 \\ 
&2.40 & 1.41{\rm e}-03 & 1.21{\rm e}-03 & 1.31{\rm e}-03 & 1.30{\rm e}-03 \\ 
&2.50 & 2.22{\rm e}-03 & 1.95{\rm e}-03 & 2.09{\rm e}-03 & 2.08{\rm e}-03 \\ 
&2.60 & 3.22{\rm e}-03 & 2.87{\rm e}-03 & 3.06{\rm e}-03 & 3.04{\rm e}-03 \\ 
&2.70 & 4.40{\rm e}-03 & 3.98{\rm e}-03 & 4.20{\rm e}-03 & 4.18{\rm e}-03 \\ 
&2.80 & 5.76{\rm e}-03 & 5.26{\rm e}-03 & 5.53{\rm e}-03 & 5.50{\rm e}-03 \\ 
&2.90 & 7.30{\rm e}-03 & 6.71{\rm e}-03 & 7.03{\rm e}-03 & 7.00{\rm e}-03 \\ 
&3.00 & 9.01{\rm e}-03 & 8.34{\rm e}-03 & 8.70{\rm e}-03 & 8.66{\rm e}-03 \\ 
\noalign{\smallskip}
\hline
\noalign{\smallskip}
\multirow{10}{*}{TPS-VSK}
&2.00 & 3.14{\rm e}-16 & 2.10{\rm e}-16 & 3.03{\rm e}-16 & 3.34{\rm e}-16 \\ 
&2.10 & 3.08{\rm e}-16 & 4.26{\rm e}-16 & 2.83{\rm e}-16 & 3.17{\rm e}-16 \\ 
&2.20 & 2.91{\rm e}-16 & 1.25{\rm e}-15 & 2.75{\rm e}-16 & 3.08{\rm e}-16 \\ 
&2.30 & 2.95{\rm e}-16 & 2.49{\rm e}-15 & 3.00{\rm e}-16 & 3.05{\rm e}-16 \\ 
&2.40 & 2.88{\rm e}-16 & 4.12{\rm e}-15 & 2.70{\rm e}-16 & 3.16{\rm e}-16 \\ 
&2.50 & 2.81{\rm e}-16 & 6.12{\rm e}-15 & 2.79{\rm e}-16 & 3.00{\rm e}-16 \\ 
&2.60 & 2.76{\rm e}-16 & 8.43{\rm e}-15 & 2.55{\rm e}-16 & 3.05{\rm e}-16 \\ 
&2.70 & 2.71{\rm e}-16 & 1.11{\rm e}-14 & 2.59{\rm e}-16 & 3.18{\rm e}-16 \\ 
&2.80 & 2.82{\rm e}-16 & 1.39{\rm e}-14 & 2.59{\rm e}-16 & 3.27{\rm e}-16 \\ 
&2.90 & 2.66{\rm e}-16 & 1.71{\rm e}-14 & 2.87{\rm e}-16 & 3.37{\rm e}-16 \\ 
&3.00 & 2.89{\rm e}-16 & 2.05{\rm e}-14 & 2.69{\rm e}-16 & 3.32{\rm e}-16 \\ 
\noalign{\smallskip}
\hline
\end{tabular}
\end{table} 

\begin{table}
    \centering
     \caption{The RMSE for different values of $\Lambda_2$ and different node distributions obtained via the cubic kernel and the TPS-VSKs for $f_3$.}
    \label{tab:3}
\begin{tabular}{cccccc}
\hline
\noalign{\smallskip}
& $\Lambda_2$ & Halton & Chebyshev & Random & Uniform \\
\noalign{\smallskip}
\hline
\noalign{\smallskip}
\multirow{10}{*}{Cubic} 
&2.00  & 1.76{\rm e}-05 & 3.05{\rm e}-06 & 2.16{\rm e}-05 & 2.24{\rm e}-05 \\ 
&2.10  & 1.56{\rm e}-04 & 1.02{\rm e}-04 & 1.32{\rm e}-04 & 1.29{\rm e}-04 \\ 
&2.20  & 5.43{\rm e}-04 & 4.17{\rm e}-04 & 4.85{\rm e}-04 & 4.78{\rm e}-04 \\ 
&2.30  & 1.19{\rm e}-03 & 9.78{\rm e}-04 & 1.09{\rm e}-03 & 1.08{\rm e}-03 \\ 
&2.40  & 2.11{\rm e}-03 & 1.79{\rm e}-03 & 1.96{\rm e}-03 & 1.94{\rm e}-03 \\ 
&2.50  & 3.29{\rm e}-03 & 2.86{\rm e}-03 & 3.09{\rm e}-03 & 3.07{\rm e}-03 \\ 
&2.60  & 4.72{\rm e}-03 & 4.18{\rm e}-03 & 4.47{\rm e}-03 & 4.44{\rm e}-03 \\ 
&2.70  & 6.39{\rm e}-03 & 5.73{\rm e}-03 & 6.08{\rm e}-03 & 6.05{\rm e}-03 \\ 
&2.80  & 8.29{\rm e}-03 & 7.50{\rm e}-03 & 7.92{\rm e}-03 & 7.88{\rm e}-03 \\ 
&2.90  & 1.04{\rm e}-02 & 9.48{\rm e}-03 & 9.97{\rm e}-03 & 9.92{\rm e}-03 \\ 
&3.00  & 1.27{\rm e}-02 & 1.16{\rm e}-02 & 1.22{\rm e}-02 & 1.22{\rm e}-02 \\ 
\noalign{\smallskip}
\hline
\noalign{\smallskip}
\multirow{10}{*}{TPS-VSK}
&2.00  & 4.85{\rm e}-05 & 1.38{\rm e}-05 & 1.09{\rm e}-04 & 8.30{\rm e}-05 \\ 
&2.10  & 7.44{\rm e}-05 & 5.39{\rm e}-05 & 1.16{\rm e}-04 & 9.25{\rm e}-05 \\ 
&2.20  & 1.88{\rm e}-04 & 1.52{\rm e}-04 & 1.88{\rm e}-04 & 1.87{\rm e}-04 \\ 
&2.30  & 3.47{\rm e}-04 & 2.80{\rm e}-04 & 3.16{\rm e}-04 & 3.37{\rm e}-04 \\ 
&2.40  & 5.33{\rm e}-04 & 4.22{\rm e}-04 & 4.71{\rm e}-04 & 5.18{\rm e}-04 \\ 
&2.50  & 7.35{\rm e}-04 & 5.66{\rm e}-04 & 6.39{\rm e}-04 & 7.16{\rm e}-04 \\ 
&2.60  & 9.45{\rm e}-04 & 7.04{\rm e}-04 & 8.13{\rm e}-04 & 9.21{\rm e}-04 \\ 
&2.70  & 1.16{\rm e}-03 & 8.31{\rm e}-04 & 9.85{\rm e}-04 & 1.13{\rm e}-03 \\ 
&2.80  & 1.37{\rm e}-03 & 9.42{\rm e}-04 & 1.15{\rm e}-03 & 1.34{\rm e}-03 \\ 
&2.90  & 1.58{\rm e}-03 & 1.04{\rm e}-03 & 1.31{\rm e}-03 & 1.54{\rm e}-03 \\ 
&3.00  & 1.79{\rm e}-03 & 1.12{\rm e}-03 & 1.47{\rm e}-03 & 1.74{\rm e}-03 \\ 
\noalign{\smallskip}
\hline
\end{tabular}
\end{table}

\begin{table}
    \centering
     \caption{The RMSE for different values of $\Lambda_2$ and different node distributions obtained via the cubic kernel and the TPS-VSKs for $f_4$.}
    \label{tab:4}
\begin{tabular}{cccccc}
\hline
\noalign{\smallskip}
& $\Lambda_2$ & Halton & Chebyshev & Random & Uniform \\
\noalign{\smallskip}
\hline
\noalign{\smallskip}
\multirow{10}{*}{Cubic}
&2.00 & 3.75{\rm e}-05 & 7.79{\rm e}-06 & 2.90{\rm e}-05 & 6.31{\rm e}-05 \\ 
&2.10 & 1.01{\rm e}-04 & 6.45{\rm e}-05 & 8.49{\rm e}-05 & 9.64{\rm e}-05 \\ 
&2.20 & 3.37{\rm e}-04 & 2.57{\rm e}-04 & 3.00{\rm e}-04 & 2.98{\rm e}-04 \\ 
&2.30 & 7.29{\rm e}-04 & 5.93{\rm e}-04 & 6.66{\rm e}-04 & 6.59{\rm e}-04 \\ 
&2.40 & 1.27{\rm e}-03 & 1.07{\rm e}-03 & 1.18{\rm e}-03 & 1.17{\rm e}-03 \\ 
&2.50 & 1.96{\rm e}-03 & 1.69{\rm e}-03 & 1.84{\rm e}-03 & 1.82{\rm e}-03 \\ 
&2.60 & 2.78{\rm e}-03 & 2.44{\rm e}-03 & 2.62{\rm e}-03 & 2.60{\rm e}-03 \\ 
&2.70 & 3.72{\rm e}-03 & 3.30{\rm e}-03 & 3.53{\rm e}-03 & 3.51{\rm e}-03 \\ 
&2.80 & 4.78{\rm e}-03 & 4.28{\rm e}-03 & 4.55{\rm e}-03 & 4.52{\rm e}-03 \\ 
&2.90 & 5.93{\rm e}-03 & 5.35{\rm e}-03 & 5.67{\rm e}-03 & 5.63{\rm e}-03 \\ 
&3.00 & 7.18{\rm e}-03 & 6.51{\rm e}-03 & 6.87{\rm e}-03 & 6.84{\rm e}-03 \\  
\noalign{\smallskip}
\hline
\noalign{\smallskip}
\multirow{10}{*}{TPS-VSK} 
&2.00 & 5.64{\rm e}-16 & 2.03{\rm e}-16 & 8.27{\rm e}-16 & 6.69{\rm e}-16 \\ 
&2.10 & 4.32{\rm e}-15 & 2.68{\rm e}-15 & 4.86{\rm e}-15 & 3.67{\rm e}-15 \\ 
&2.20 & 1.28{\rm e}-14 & 8.38{\rm e}-15 & 1.46{\rm e}-14 & 1.11{\rm e}-14 \\ 
&2.30 & 2.50{\rm e}-14 & 1.67{\rm e}-14 & 2.87{\rm e}-14 & 2.19{\rm e}-14 \\ 
&2.40 & 4.02{\rm e}-14 & 2.73{\rm e}-14 & 4.66{\rm e}-14 & 3.55{\rm e}-14 \\ 
&2.50 & 5.82{\rm e}-14 & 3.97{\rm e}-14 & 6.76{\rm e}-14 & 5.15{\rm e}-14 \\ 
&2.60 & 7.85{\rm e}-14 & 5.38{\rm e}-14 & 9.14{\rm e}-14 & 6.97{\rm e}-14 \\ 
&2.70 & 1.01{\rm e}-13 & 6.93{\rm e}-14 & 1.18{\rm e}-13 & 8.96{\rm e}-14 \\ 
&2.80 & 1.25{\rm e}-13 & 8.60{\rm e}-14 & 1.46{\rm e}-13 & 1.11{\rm e}-13 \\ 
&2.90 & 1.50{\rm e}-13 & 1.04{\rm e}-13 & 1.75{\rm e}-13 & 1.34{\rm e}-13 \\ 
&3.00 & 1.76{\rm e}-13 & 1.22{\rm e}-13 & 2.07{\rm e}-13 & 1.58{\rm e}-13 \\ 
\noalign{\smallskip}
\hline
\end{tabular}
\end{table} 

Tests have been carried out on a Intel(R) Core(TM) i7 CPU 4712MQ 2.13 GHz processor. 
The RMSEs obtained by considering both the standard approximation via the cubic kernel and the TPS-VSKs are reported in Tables \ref{tab:1}--\ref{tab:4} for the test functions $f_i$, $i=1,\ldots,4$, respectively. The augmented feature for the VSK setting is dynamically selected by fitting $f_1$ and $f_2$  with a \emph{rational decay}, while $f_3$ and $f_4$ are modelled by an \emph{exponential decay}. The reader should note that closer is   the fitting of   $\psi$ to the
behaviour of the true function $f$, smaller is the RMSE. Indeed, the method almost reaches the machine precision when the test function $f$ belongs to one of the classes ${\cal R}$ or ${\cal E}$, defined in \eqref{ps2} and \eqref{ps1}, respectively. This holds true for $f_2$ and $f_4$. 
 
Moreover, we note that both methods behave similarly for each of the selected node distributions. This is more in general a peculiarity of kernel methods that are robust for different data distributions.  However, in our experiments, the TPS-VSK method  usually outperforms the standard cubic extrapolation. For a graphical feedback on the absolute errors refer to Figure \ref{fig:1}. 
\begin{figure}
    \centering
    \includegraphics[scale=0.8]{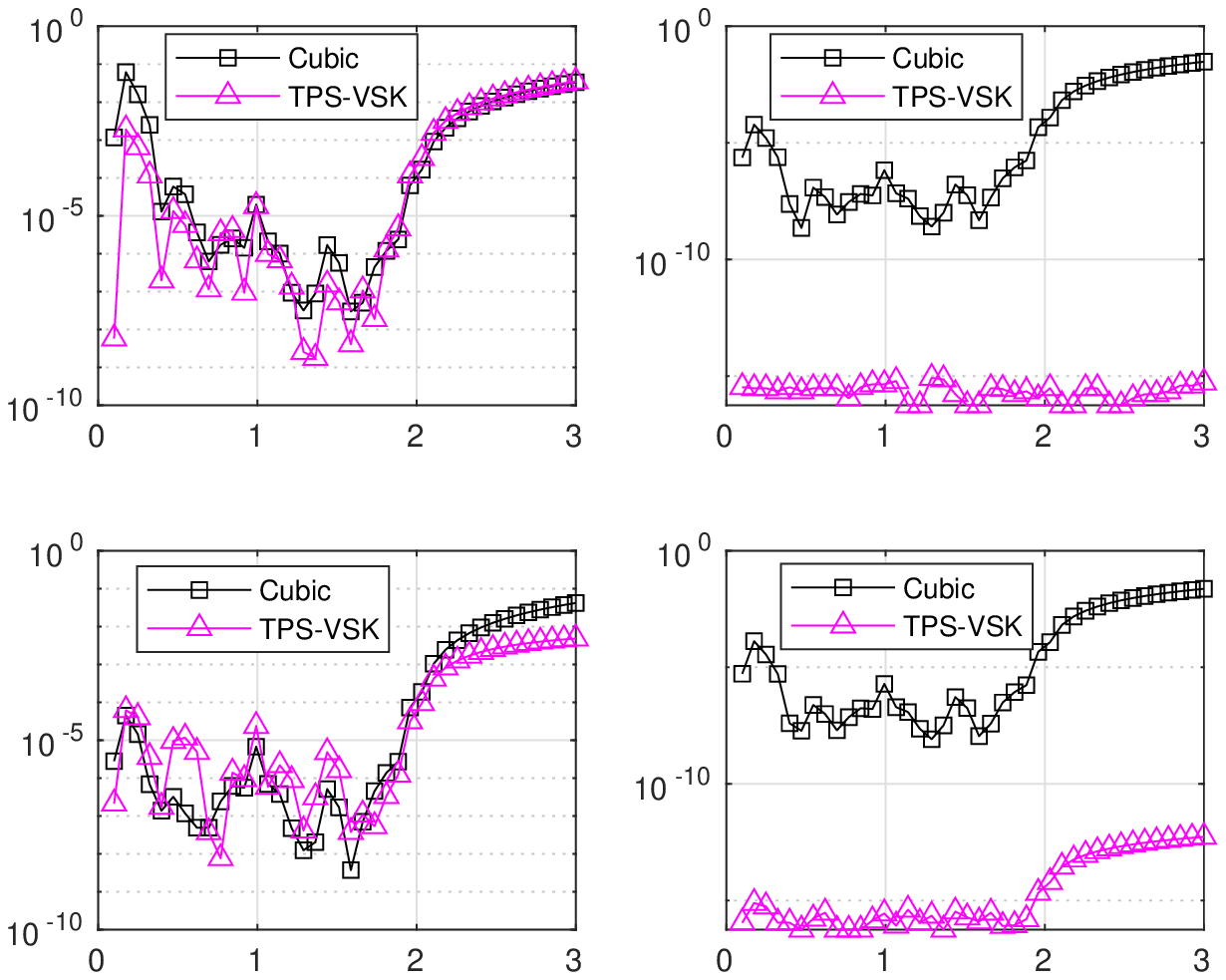}
    \caption{Left to right, top to bottom: the absolute errors in logarithmic scale, for  $\Lambda_2=3$ and Halton node distributions obtained via the cubic kernel ('$\square$') and the TPS-VSKs ('$\triangle$') for $f_1$, $f_2$, $f_3$ and $f_4$. }
    \label{fig:1}
\end{figure}

As last example, we take the functions $f_5$ and $f_6$ and we compare our varying scale setting with SVR. In this experiments, we also introduce noise on the measurements, i.e.  $f_i=f({x}_i )+ \delta_i$, $i=1,\ldots,n$. Precisely, we assume Gaussian white noise, i.e.  $\boldsymbol{\delta}=(\delta_1,\ldots,\delta_n )^{\intercal} \sim {\cal N}(0,\sigma^2 \mI)$, where $\mI$ is the $n \times n$ identity matrix and we fix $\sigma = 1{\rm e}-04$. 
We compare the VSK setting with the SVR trained with the cubic kernel and a standard $3$-fold validation for optimizing the hyperparameters (for this scope we use   the \textsc{Matlab}\textsuperscript{\textregistered} function {\tt fitrsvm.m}). 
Since we do not use any data-filling strategy for SVR, we only take into account equispaced data. The results are reported in Table \ref{tab:5}. For a graphical feedback, refer to Figure \ref{fig:5}. We note that, for the function $f_5$, which is approximately \emph{linear} on $\Lambda$, our results are comparable with SVR;  on the opposite, when \emph{learning} the function is not trivial, e.g. for $f_6$,  our model outperforms the standard SVR. However, we have to point out that the SVR performances could be improved by data assimilation procedures, i.e. one could construct a SVR model for each values of $\Lambda_2$. Nevertheless, this procedure would be too expensive if compared to the VSK strategy that only requires the computation of an additional feature. 

\begin{table}
    \centering
        \caption{The RMSE for different values of $\Lambda_2$ and uniform node distribution obtained via SVR and the TPS-VSKs for $f_5$ and $f_6$.}
    \label{tab:5}
\begin{tabular}{ccccc}
\hline
\noalign{\smallskip}
& \multicolumn{2}{c}{$f_5$} & \multicolumn{2}{c}{$f_6$} \\
\noalign{\smallskip}
$\Lambda_2$ & SVR & TPS-VSK & SVR & TPS-VSK \\
\noalign{\smallskip}
\hline
\noalign{\smallskip}
2.00 & 2.32{\rm e}-05 & 1.29{\rm e}-04 & 3.79{\rm e}-03 & 1.29{\rm e}-04 \\ 
2.10 & 2.35{\rm e}-05 &  1.32{\rm e}-04 & 6.09{\rm e}-03 & 1.31{\rm e}-04 \\ 
2.20 & 2.54{\rm e}-05  & 1.46{\rm e}-04  & 1.09{\rm e}-02  & 1.38{\rm e}-04 \\ 
2.30 & 2.93{\rm e}-05 & 1.73{\rm e}-04  & 1.84{\rm e}-02  & 1.49{\rm e}-04 \\ 
2.40 & 3.56{\rm e}-05 & 2.14{\rm e}-04 & 2.88{\rm e}-02  & 1.67{\rm e}-04 \\ 
2.50 & 4.42{\rm e}-05  & 2.64{\rm e}-04 & 4.24{\rm e}-02  & 1.83{\rm e}-04 \\ 
2.60 & 5.50{\rm e}-05  & 3.26{\rm e}-04  & 5.97{\rm e}-02 & 2.04{\rm e}-04 \\ 
2.70 & 6.78{\rm e}-05 & 3.98{\rm e}-04  & 8.11{\rm e}-02 & 2.26{\rm e}-04 \\ 
2.80 & 8.26{\rm e}-05 & 4.76{\rm e}-04 & 1.07{\rm e}-01  & 2.44{\rm e}-04 \\ 
2.90 & 9.94{\rm e}-05 & 5.68{\rm e}-04 & 1.38{\rm e}-01 & 2.68{\rm e}-04 \\ 
3.00 & 1.18{\rm e}-04  & 6.67{\rm e}-04 & 1.74{\rm e}-01 & 2.88{\rm e}-04 \\ 
\noalign{\smallskip}
\hline
\end{tabular}
\end{table}

\begin{figure}
    \centering
    \includegraphics[scale=0.8]{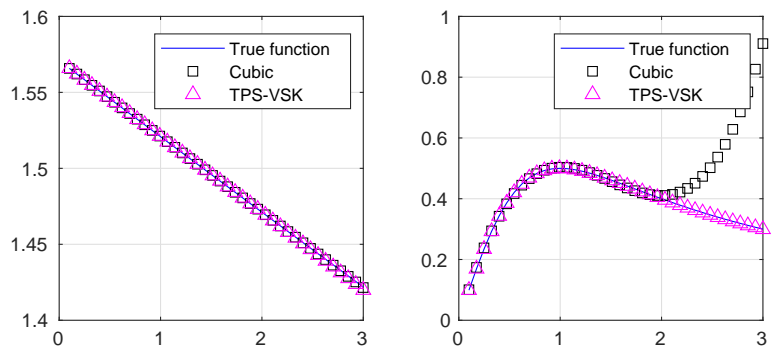}
    \caption{The graphical results for  $\Lambda_2=3$ and equispaced node distributions obtained via the cubic kernel  ('$\square$') and the TPS-VSKs ('$\triangle$') for $f_5$ and $f_6$, left and right respectively. }
    \label{fig:5}
\end{figure}

\section{Conclusion and work in progress}
\label{concl}

We investigated a novel procedure for the extrapolation issue based on the use of VSKs that serve as feature augmentation strategy. After extending them to strictly conditionally positive definite kernels and providing error bounds for the TPS in Beppo-Levi spaces, we tested the tool on several models. The results and in particular the comparison with SVR stress the benefits coming from the use of VSKs for the extrapolation issue.

Future work consists in extending the VSK setting to the context of SVR. The use of a scaling function indeed would introduce \emph{new feature maps} and \emph{spaces} which need further investigations.

 % (see Sect.~\ref{sec:1}).
%\paragraph{Paragraph headings} Use paragraph headings as needed.
 
% For one-column wide figures use
%\begin{figure}
%\includegraphics{example.eps}
% figure caption is below the figure
%\caption{Please write your figure caption here}
%\label{fig:1}       % Give a unique label
%\end{figure}
%
% For two-column wide figures use
%\begin{figure*}
% Use the relevant command to insert your figure file.
% For example, with the graphicx package use
%  \includegraphics[width=0.75\textwidth]{example.eps}
% figure caption is below the figure
%\caption{Please write your figure caption here}
%\label{fig:2}       % Give a unique label
%\end{figure*}
%
% For tables use
%\begin{table}
% table caption is above the table
%\caption{Please write your table caption here}
%\label{tab:1}       % Give a unique label
% For LaTeX tables use
%\begin{tabular}{lll}
%\hline\noalign{\smallskip}
%first & second & third  \\
%\noalign{\smallskip}\hline\noalign{\smallskip}
%number & number & number \\
%number & number & number \\
%\noalign{\smallskip}\hline
%\end{tabular}
%\end{table}

%\begin{acknowledgements}
%If you'd like to thank anyone, place your comments here
%and remove the percent signs.
%\end{acknowledgements}

% Authors must disclose all relationships or interests that 
% could have direct or potential influence or impart bias on 
% the work: 
%
% \section*{Conflict of interest}
%
% The authors declare that they have no conflict of interest.

% BibTeX users please use one of

% \bibliographystyle{unsrt}      
%
% \bibliography{bibEmma.bib}   
 % name your BibTeX data base

\end{document}